\newtheorem{thm}{Theorem}[section]
\newtheorem{cor}[thm]{Corollary}
\newtheorem{prop}[thm]{Proposition}
\theoremstyle{definition}
\newtheorem{defn}[thm]{Definition}
\theoremstyle{remark}
\newtheorem{rem}[thm]{Remark}
\numberwithin{equation}{section}
\begin{document}

\author={Hvedri Inassaridze}

\title {algebraic K-functors for \Gamma-rings}

\address{A.~Razmadze Mathematical Institute of Tbilisi State University, 6, Tamarashvili Str., Tbilisi 0179, Georgia.}
\email{inassari@gmail.com}

\thanks{}
\subjclass[2010]{13D03, 13D07, 20E22, 18G10, 18G25, 18G45, 18G50, 20J05}
\keywords{extensions of $\Gamma$-groups, Hochschild homology, symbol group, $\Gamma$-equivariant group (co)homology, homology of crossed $\Gamma$-modules}

\section{title}
{Algebraic K-functors for $\Gamma$-rings}

\section{abstract}
This is an attempt to extend to algebraic K-theory our approach to group actions in homological algebra that could be called introduction to $\Gamma$-algebraic K-theory. For $\Gamma$-rings the Milnor algebraic K-theory and Swan's algebraic K-functors are introduced and investigated. Particularly, the Matsumoto conjecture related to symbol group, and the Milnor conjectures related to  Witt algebras and Chow groups are extended and proved.

\section{introduction}

We continue the study of our approach to group actions in homological algebra for the case of algebraic K-theory, that was started in [8] and continued in [9,10]. The originality of our approach to the study of homological properties of groups and rings consists of the definition of a new and natural, unexpected action of a group $\Gamma$ on the classical chain complexes defining classical homology of a group $G$ and of a ring $R$ respectively, induced by its given action on the group $G$ and on the ring $R$ respectively. By taking the homology groups of the tensor product of the classical chain complex under this action of  $\Gamma$ with the coefficient group, we have defined new homology and cohomology groups of $\Gamma$-groups and $\Gamma$-rings, called $\Gamma$-equivariant (co)homology. These new tools allowed us to extend important homological properties of groups and rings to the case of $\Gamma$-groups and $\Gamma$-rings, opening a new direction in homological algebra that we called $\Gamma$-homological algebra. The recently developed investigation of equivariant cohomological dimensions is closely related to $\Gamma$-homological algebra [3,4].

We are trying to investigate the influence of group actions on the foundations of algebraic K-theory [13,2,16,17] when a discrete group is acting on the basic ring.

\;

Some notations that will be used throughout the paper:

$\Gamma G$ denotes the normal subgroup of $G$ generated by the set of elements $^{\gamma}gg^{-1},g\in G$, $\gamma \in \Gamma$.
The quotient group $G/\Gamma G$ is denoted  $G_\Gamma$.

$\Gamma-[G,G]$ denotes the subgroup of the $\Gamma$-group $G$ generated by the commutant subgroup [$G$,$G$] and the elements of the form $^{\gamma}gg^{-1}$, $g\in G$, $\gamma \in \Gamma$. It is called the $\Gamma$-commutant of the group $G$.

$G^{ab}_\Gamma$ denotes the abelianization of the group $G_\Gamma$.

For an augmented simplicial group $G^{+}_{\ast}:G_{\ast}\overset{\tau}\rightarrow G_{-1}$, the notation $\pi_{-1}(G^{+}_{\ast})$ equal to $Coker \tau$ is used.

${\mathbf{G}}^\Gamma$  and ${\mathbf{R}}^\Gamma$ denote the category whose objects are groups and rings respectively on which a fixed group $\Gamma$ is acting, called $\Gamma$-groups and $\Gamma$-rings respectively, and morphisms are homomorphisms compatible with the action of $\Gamma$.

\;


\section{Preliminaries}

We give some important examples of $\Gamma$-rings with unit.

Let $A$ be an arbitrary ring with unit and $\Gamma$ be a group. It is said that $A$ is a $\Gamma$-ring, or $\Gamma$ is acting on $A$,  if there is given a group homomorphism
$\Gamma \rightarrow Aut(A)$.

Rings having the identity automorphism only: the commutative rings $\mathds{Z}$, $\mathds{Q}$, $\mathds{R}$, $\mathds{Z}/p\mathds{Z}$, $\mathds{Z}/(p)$ (the ring of integers localized at p a rational prime), and $\mathds{Q}_{p}$ (the ring of p-adic numbers).

The fields are important examples of $\Gamma$-rings. For instance, the field $\mathds{C}$ of complex numbers has two automorphisms, the identity and the complex conjugation sending a + bi  to  a - bi;
the field $F_q$ of prime power $q=p^n$ has the Frobenius automorphism $\phi: x \rightarrow x^p$, and the automorphism group is cyclic of order $n$ generated by $\phi$; the quadratic field $\mathds{Q}(2)$ with automorphism group of order 2; the Galois groups of field extensions.

The non-commutative ring $A$ with non-trivial group $A^*$ of invertible elements (called also units)(for instance the quaternion algebra). Then   $A^*$ is acting on $A$  sending $x \rightarrow axa^-1$, $x\in A$, $a\in A^*$.

Let $RA$ be an $R$-algebra, $R$ is a commutative ring with unit. The group $\Gamma$ is acting on $RA$ if it is acting on $R$ and on $A$, and one has
$^{\gamma}(ra)= ^{\gamma}(r)^{\gamma}(a)$. For instance, the symmetric group $S_n$ is acting on the polynomial $R$-algebra $R[x_1, ... ,x_n]$, acting trivially on $R$
 and naturally acting on $Z[x_1, ... ,x_n]$.

 Let R(G) be a group ring and $\Gamma$ be a group acting on the group G and on the ring R.Then R(G) becomes a $\Gamma$-ring with the action $^{\gamma}(rg) = ^{\gamma}
 r^{\gamma} g,  r\in R, g\in G, \gamma \in \Gamma.$

Now, we recall some definitions and propositions given in [8,10] which will be used later.

Any exact sequence E of $\Gamma$-groups
\begin{equation}
E: 1\rightarrow A\rightarrow B\overset{\tau}\rightarrow G\rightarrow 1
\end{equation}
is called $\Gamma$-extension of the $\Gamma$-group $G$ by the $\Gamma$-group $A$. The extension $E$ is an extension with $\Gamma$-section map if there is a map $\beta: G\rightarrow X$ such that $\tau\beta = 1_G$, and compatible with the action of $\Gamma$. In addition, if $\beta$ is a homomorphism then the extension E is called split extension.

 A $\Gamma$-equivariant $G$-module $A$ is a $G$-module equipped with a $\Gamma$-module structure and the actions of $G$ and $\Gamma$ are related to each other by the equality
$$
^{\sigma}(^{g}a)=^{\sigma}g(^{\sigma} a)
$$
for $g\in G$, $\sigma\in \Gamma$,$a\in A$.

The category of $\Gamma$-equivariant $G$-modules is equivalent to the category of $G\rtimes \Gamma$-modules, where $G\rtimes \Gamma$ is the semi-direct product of G and $\Gamma$.

If $E$ is an extension with $\Gamma$-section map and $A$ is a $\Gamma$-equivariant $G$-module it is called  $\Gamma$-equivariant extension of $G$ by $A$. In addition, if $X$ and $G$ are $\Gamma$-equivariant G-modules it is called proper sequence of $\Gamma$-equivariant $G$-modules.

 A $\Gamma$-equivariant $G$-module $F$ is called a relatively free $G\rtimes\Gamma$-module if it is a free $G$-module with basis a $\Gamma$-set, and a relatively projective $\Gamma$-equivariant $G$-module is a retract of a relatively free $\Gamma$-equivariant G-module.

The class $\mathcal{P}$ of relatively projective $\Gamma$-equivariant $G$-modules is a projective class with respect to proper sequences of $\Gamma$-equivariant $G$-modules.

For the cohomological description of the set $E^{1}_{\Gamma}(G,A)$ of equivalence classes of $\Gamma$-equivariant extensions of $G$ by $A$ the $\Gamma$-equivariant homology and cohomology of $\Gamma$-groups have been introduced as relative $Tor_n^{\mathcal{P}}$ and $Ext^n_{\mathcal{P}}, n \geq 0$, in the category of $\Gamma$-equivariant $G$-modules, namely
the $\Gamma$-equivariant homology and cohomology of $\Gamma$-groups are defined as follows
$$
 H_n^{\Gamma}(G,A)= Tor_n^{\mathcal{P}}(\mathbb{Z},A), H^n_{\Gamma}(G,A)= Ext^n_{\mathcal{P}}(\mathbb{Z},A), n\geq 0,
$$
where the functors $\bigotimes$ and $Hom$ are taken over the ring $\mathbb{Z}(G\rtimes \Gamma)$ and the groups $G$ and $\Gamma$ are trivially acting on the abelian group $\mathbb{Z}$ of integers. This cohomology is the relative group cohomology in the sense of Hochschild and Adamson [6,1].

 A $\Gamma$-group is called $\Gamma$-free if it is a free group with basis a $\Gamma$-set.

 Any free group $F$($G$) generated by a $\Gamma$-group $G$ becomes a $\Gamma$-free group by the following action of $\Gamma$: $^\gamma{\mid g \mid} =  \mid ^\gamma{g} \mid $, $g\in G, \gamma \in \Gamma$. The defining property of the $\Gamma$-free group $F$ with basis $E$ is that every $\Gamma$-map $E\overset{f}\rightarrow G$ to a $\Gamma$-group $G$ is uniquely extended to a $\Gamma$-homomorphism $F\overset{f'}\rightarrow G$.

 Let $\mathbb{F}$ be the projective class of $\Gamma$-free groups in the category ${\mathbf{G}}^\Gamma$ of $\Gamma$-groups.

 There are isomorphisms
$$
H_n^{\Gamma}(G,A) \cong L^{\mathbb{F}}_{n-1}(I(G)\otimes_{G\ltimes \Gamma}A), H^n_{\Gamma}(G,A)\cong R^{n-1}_{\mathbb{F}}Der_{\Gamma}(G,A)
$$
for $n\geq 2$, where I(G) is the kernel of the natural homomorphism $\mathbb{Z}(G)\rightarrow \mathbb{Z}$ of $\Gamma$-equivariant G-modules, $Der(G,A)$ is the group of $\Gamma$-derivations, $L^{\mathbb{F}}_{n-1}$ and R$^{n-1}_{\mathbb{F}}$ denote respectively the left and right derived functors with respect to the projective class $\mathbb{F}$.

Let
$$
1\rightarrow A\rightarrow B\overset{\tau}\rightarrow G\rightarrow 1
$$
be a short exact sequence of $\Gamma$-groups with $\Gamma$-section map and $\alpha: P\rightarrow B$ be a $\Gamma$-projective presentation of the $\Gamma$-group B. Then there is an exact sequence
$$
0\rightarrow V\rightarrow H^{\Gamma}_2(B)\rightarrow H^{\Gamma}_2(G)\rightarrow A/[B,A]_{\Gamma}\rightarrow H^{\Gamma}_1(B)\rightarrow H^{\Gamma}_1(G)\rightarrow 0,
$$
where V is the kernel of the $\Gamma$-homomorphism $[P,S]_{\Gamma}/[P,R]_{\Gamma}\rightarrow [B,A]_{\Gamma}$ induced by $\alpha$, R = Ker $\alpha$ and S = Ker $\tau\alpha$.

If $G$ is a $\Gamma$-group, then

(1)
$H^{\Gamma}_1(G,A) = G/[G,G]_{\Gamma}\otimes A$,
$G$ and $\Gamma$ are trivially acting on $A$.

(2)
$H^{\Gamma}_2(G)$ is isomorphic to the group $(R\cap [P,P]_{\Gamma})/[P,R]_{\Gamma}$, where $R = Ker \alpha$ and $\alpha: P\rightarrow G$ is a $\Gamma$-projective presentation of $G$ (Hopf formula for the $\Gamma$-equivariant homology of groups).

\

\section{$\Gamma$-algebraic K-functors}

\begin{prop}
Let $\tau: G\rightarrow G'$ be a surjective homomorphism of $\Gamma$-groups. Then one has the following exact sequence of groups:
$$
0\rightarrow Ker\Gamma(\tau)/\Gamma Ker\tau \rightarrow (Ker\tau)_{\Gamma}\rightarrow G_{\Gamma}\rightarrow G'_{\Gamma}\rightarrow 0.
$$
\end{prop}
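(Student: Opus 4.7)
The plan is a direct diagram chase: construct the four arrows by passing to $\Gamma$-coinvariants, then verify exactness position by position. Only one step requires more than formal bookkeeping, namely the surjectivity of the restriction $\Gamma(\tau)\colon\Gamma G\to\Gamma G'$ induced by $\tau$. Set $K=\ker\tau$. The $\Gamma$-equivariance of $\tau$ gives $\tau(\Gamma G)\subseteq\Gamma G'$, so $\Gamma(\tau)$ is defined and $\ker\Gamma(\tau)=K\cap\Gamma G$; moreover each generator ${}^{\gamma}k\cdot k^{-1}$ of $\Gamma K$ already lies in $\Gamma G$, giving $\Gamma K\subseteq K\cap\Gamma G$. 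This yields the three maps of the sequence: the inclusion-induced $\iota\colon(K\cap\Gamma G)/\Gamma K\to K_{\Gamma}$, the inclusion-induced $j\colon K_{\Gamma}\to G_{\Gamma}$, and the map $\bar\tau\colon G_{\Gamma}\to G'_{\Gamma}$ induced by $\tau$.

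Injectivity of $\iota$, surjectivity of $\bar\tau$, and exactness at $K_{\Gamma}$ (a coset $k\Gamma K$ lies in $\ker j$ precisely when $k\in\Gamma G$, that is, when $k\in K\cap\Gamma G$) are immediate. The only step with real content is exactness at $G_{\Gamma}$. A class $g\Gamma G$ lies in $\ker\bar\tau$ precisely when $\tau(g)\in\Gamma G'$, while the image of $j$ equals $(K\cdot\Gamma G)/\Gamma G$; these two subgroups coincide as soon as $\Gamma(\tau)$ is surjective. This is the single place where the hypothesis that $\tau$ is onto is genuinely used: given a generator ${}^{\gamma}g'\cdot(g')^{-1}$ of $\Gamma G'$, lift $g'$ to some $g\in G$; then ${}^{\gamma}g\cdot g^{-1}\in\Gamma G$ maps to ${}^{\gamma}g'\cdot(g')^{-1}$ by the $\Gamma$-equivariance of $\tau$, so $\tau(\Gamma G)$ is a normal subgroup of $G'$ containing every generator of $\Gamma G'$, and the equality $\tau(\Gamma G)=\Gamma G'$ follows. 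I expect this lifting argument --- together with the routine remark that $K\cdot\Gamma G$ is a subgroup because $\Gamma G\trianglelefteq G$ by definition --- to be the only non-formal ingredient of the proof; everything else reduces to a standard isomorphism-theorem chase.
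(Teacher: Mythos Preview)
Your proof is correct and is essentially the same argument as the paper's. The paper packages the chase as a $3\times 3$ commutative diagram with exact rows and columns, then factors the four-term sequence through the auxiliary group $\ker\tau_{\Gamma}$ via the short exact sequence $0\to\ker\Gamma(\tau)/\Gamma\ker\tau\to(\ker\tau)_{\Gamma}\to\ker\tau_{\Gamma}\to 0$; you instead verify exactness position by position, but the single substantive step---the surjectivity of $\Gamma(\tau)\colon\Gamma G\to\Gamma G'$, which the paper leaves implicit in asserting the top row of its diagram is exact---is exactly the one you isolate and prove.
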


\begin{proof}
Consider the following commutative diagram of exact columns and rows:

\begin{equation}
\xymatrix{
 & 0 \ar[d] & 0 \ar[d] & 0 \ar[d]\\
0 \ar[r] & Ker \Gamma(\tau) \ar[d]\ar[r] &  \Gamma G \ar[d]\ar[r]^{\Gamma(\tau)} & \ar[d]\ar[r]   \Gamma G' \ar[d]\ar[r] & 0\\
0 \ar[r] & Ker\tau \ar[d]\ar[r] & G \ar[d]\ar[r]^{\tau}  & G' \ar[d]\ar[r] & 0\\
0 \ar[r] & Ker\tau_{\Gamma}\ar[r] \ar[r] \ar[d] & G_{\Gamma} \ar[d]\ar[r]^{\tau_\Gamma} & G'_{\Gamma} \ar[d]\ar[r] & 0\\
 & 0 & 0 & 0 &
}
\end{equation}

One gets $\Gamma Ker\tau  \subseteq Ker\Gamma(\tau)$, the isomorphism $Ker\tau/Ker\Gamma(\tau) \cong Ker\tau_{\Gamma}$ and the surjection $Ker\tau/\Gamma Ker\tau \rightarrow Ker\tau/Ker\Gamma(\tau)$, implying the exact sequence  $e\rightarrow Ker\Gamma(\tau)/\Gamma Ker\tau \rightarrow (Ker\tau)_{\Gamma} \rightarrow Ker\tau_{\Gamma} \rightarrow e.$

According to the diagam 5.1, we finally obtain the required exact sequence.
\end{proof}
\begin{cor}
If $G \overset{\alpha}\rightarrow G'\overset{\beta}\rightarrow G''\rightarrow e$ is an exact sequence of $\Gamma$-groups, then the sequence $G_{\Gamma} \overset{\alpha_{\Gamma}}\rightarrow G'_{\Gamma}\overset{\beta_{\Gamma}}\rightarrow G''_{\Gamma}\rightarrow e$ is exact.
\end{cor}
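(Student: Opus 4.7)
The plan is to derive the corollary directly from the proposition by factoring $\alpha$ through $\mathrm{Ker}\,\beta$ and then applying the proposition twice.

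First I would exploit the hypothesis that $\beta$ is surjective by applying the proposition to $\tau \defeq \beta \colon G' \to G''$. This immediately yields the exact sequence
$$
(\mathrm{Ker}\,\beta)_\Gamma \longrightarrow G'_\Gamma \xrightarrow{\;\beta_\Gamma\;} G''_\Gamma \longrightarrow 0,
$$
so exactness at $G''_\Gamma$ is already secured, and exactness at $G'_\Gamma$ is reduced to showing that the image of $\alpha_\Gamma$ coincides with the image of the natural map $(\mathrm{Ker}\,\beta)_\Gamma \to G'_\Gamma$.

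Next I would use exactness of the original sequence at $G'$, namely $\mathrm{Im}\,\alpha = \mathrm{Ker}\,\beta$, to factor $\alpha$ in the category $\mathbf{G}^\Gamma$ as a surjection $\alpha' \colon G \epi \mathrm{Ker}\,\beta$ of $\Gamma$-groups followed by the inclusion $\mathrm{Ker}\,\beta \hookrightarrow G'$. Applying the proposition a second time, now to the surjective $\Gamma$-homomorphism $\alpha'$, gives that $(\alpha')_\Gamma \colon G_\Gamma \to (\mathrm{Ker}\,\beta)_\Gamma$ is surjective (this is just the last two terms of the four-term exact sequence provided by the proposition).

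Chaining these together, the composite $G_\Gamma \xrightarrow{(\alpha')_\Gamma} (\mathrm{Ker}\,\beta)_\Gamma \longrightarrow G'_\Gamma$ is equal to $\alpha_\Gamma$ by functoriality, and since $(\alpha')_\Gamma$ is surjective, $\mathrm{Im}\,\alpha_\Gamma$ coincides with the image of $(\mathrm{Ker}\,\beta)_\Gamma \to G'_\Gamma$, which by the first step equals $\mathrm{Ker}\,\beta_\Gamma$. This gives exactness at $G'_\Gamma$, completing the proof. There is no real obstacle here; the only point to keep in mind is that the factorization $\alpha = \iota \circ \alpha'$ must be taken in the $\Gamma$-equivariant category, which is automatic because $\mathrm{Im}\,\alpha$ is a $\Gamma$-subgroup of $G'$.
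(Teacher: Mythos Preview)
Your argument is correct and matches the paper's approach exactly: apply Proposition~5.1 to the surjection $\beta$ to identify $\mathrm{Ker}\,\beta_\Gamma$ with the image of $(\mathrm{Ker}\,\beta)_\Gamma \to G'_\Gamma$, and then use the surjection $G_\Gamma \to (\mathrm{Ker}\,\beta)_\Gamma$ coming from the surjective factor $\alpha' \colon G \twoheadrightarrow \mathrm{Ker}\,\beta$. The paper compresses all this into a single sentence (``Taking into account the surjection $G_\Gamma \to (\mathrm{Ker}\,\beta)_\Gamma$, follows immediately from Proposition~5.1''), whereas you spell out the factorization and invoke the proposition a second time to justify that surjection, but the content is the same.
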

Taking into account the surjection $G_{\Gamma}\rightarrow (Ker\beta)_{\Gamma}$, follows immediately from Proposition 5.1.

Later we will need the following formula related to exact sequences of $\Gamma$-groups. Let $G$ be a $\Gamma$-group an $H$ be its normal $\Gamma$-subgroup. Consider the exact sequence of $\Gamma$-groups: $e\rightarrow H \overset{\sigma}\rightarrow G\rightarrow G / H\rightarrow e$. It induces the short exact sequence: $(H)_{\Gamma} \overset{\sigma_{\Gamma}}\rightarrow (G)_{\Gamma}\rightarrow (G / H)_{\Gamma}\rightarrow e$ implying the isomorphism $Coker \sigma_{\Gamma} \cong (G / H)_{\Gamma}$. On the other hand, one has the equality
$Coker \sigma_{\Gamma} = G / H\cdot \Gamma G$, and finally one gets the needed formula
$$
(G / H)_{\Gamma} = G / (H\cdot \Gamma G).
$$
 We are now going to define algebraic K-functors of a ring $A$ with unit on which a group $\Gamma$ is acting and called $\Gamma$-ring.

It is easily checked that this action induces the action on the n-th general group $GL_{n}(A)$ of $n \times n$ invertible matrices over A given by $^{\gamma}\parallel a_{ij} \parallel =  \parallel ^{\gamma}a_{ij} \parallel$, and the n-th elementary group $E_{n}(A)$ of elementary $n \times n$ - matrices $e_{ij}(a), i,j\leq n, a\in A$, over $A$ becomes a $\Gamma$-subgroup of  $GL_{n}(A)$. Therefore one gets a natural action of the group $\Gamma$ on the group $K_{1}(n,A) =  GL_{n}(A) / E_{n}(A).$.

\begin{defn}
For the unital ring A the non-stable $\Gamma$ - algebraic K-functor $K^{\Gamma}_{1}(n,A)$ is defined as $(K_{1}(n,A))_{\Gamma}$.
\end{defn}

Consider the exact sequence $e\rightarrow E_{n}(A)\overset{\nu_{n}}\rightarrow GL_{n}(A)\rightarrow K_{1}(n,A)\rightarrow 0$.  According to Corollary 4.2, it induces the exact sequence

$e\rightarrow (E_{n}(A))_{\Gamma}\rightarrow (GL_{n}(A))_{\Gamma}\rightarrow (K_{1}(n,A))_{\Gamma}\rightarrow 0$.

Passing to the direct limit,

$GL(A) = (GL_{n}(A))_{n\rightarrow \infty},  E(A) = (E_{n}(A))_{n\rightarrow \infty}, K_{1}(A) = (K_{1}(n,A))_{n\rightarrow \infty}$,
one gets the action of $\Gamma$ on $GL(A)$, $E(A)$ and $K_{1}(A)$ respectively,

and  the isomorphisms $(GL(A))_{\Gamma}\approx ((GL_{n}(A))_{\Gamma})_{n\rightarrow \infty}, (E(A))_{\Gamma}\approx ((E_{n}(A))_{\Gamma})_{n\rightarrow \infty}, (K_{1}(A))_{\Gamma}\approx ((K_{1}(n,A))_{\Gamma})_{n\rightarrow \infty}$.

Finally, we obtain the following exact sequence

$(E(A))_{\Gamma}\overset{\nu_{\Gamma}}\rightarrow (GL(A))_{\Gamma}\rightarrow K^{\Gamma}_{1}(A)\rightarrow 0$.

\begin{defn}
For the unital ring A the $\Gamma$ - algebraic K-functor $K^{\Gamma}_{1}(A)$ is defined as $(K_{1}(A))_{\Gamma}$.
\end{defn}

One has isomorphisms $K^{\Gamma}_{1}(A)\cong GL(A)/(E(A)\cdot \Gamma GL(A))\cong H^{\Gamma}_{1}(GL(A))$.

Let A be a commutative $\Gamma$-ring with unit, and $A^{*}$ its multiplicative subgroup of invertible elements. There is the naturally splitting exact sequence
$$
 e\rightarrow SL(A)\rightarrow GL(A)\overset{det}\rightarrow A^{*}\rightarrow 0,
$$

where $det$ denotes the determinant map which is a $\Gamma$-homomorphism, SL(A) is the kernel of det, and the splitting map $A^{*}\rightarrow GL(A)$ is also a $\Gamma$-homomorphism.

This exact sequence induces the splitting exact sequence of $\Gamma$-groups,
$$
0\rightarrow SL(A) / E(A)\rightarrow K_{1}(A)\rightarrow A^{*}\rightarrow 0
$$

and the isomorphism of $\Gamma$-groups $K_{1}(A)\overset{\approx}\rightarrow SL(A) / E(A) \oplus A^{*}$, where $\Gamma$ is acting componentwise on the right side.

Finally, we obtain the following

\begin{prop}

If A is a commutative unital $\Gamma$-ring, there is an isomorphism

$$
K^{\Gamma}_{1}(A)\approx (SL(A) / E(A))_{\Gamma} \oplus (A^{*})_{\Gamma}\cong (SL(A) / (E(A)\cdot \Gamma SL(A))) \oplus (A^{*})_{\Gamma},
$$

and $K^{\Gamma}_{1}(F) = F^{*}_{\Gamma}$ for a field $F$.

\end{prop}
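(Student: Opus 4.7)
The plan is to apply the coinvariant functor $(-)_{\Gamma}$ to the $\Gamma$-equivariant direct sum decomposition $K_{1}(A)\cong SL(A)/E(A)\oplus A^{*}$ that was already deduced just above the proposition from the splitting of the determinant sequence. Because both the projections and the splitting section are $\Gamma$-homomorphisms and $\Gamma$ acts componentwise on the direct sum, it is enough to verify that $(-)_{\Gamma}$ carries a componentwise direct sum to a direct sum. This is immediate from the generators of $\Gamma G$: for $G=G_{1}\oplus G_{2}$ with componentwise action, each generator $^{\gamma}(g_{1},g_{2})(g_{1},g_{2})^{-1}$ splits as $(^{\gamma}g_{1}g_{1}^{-1},\,^{\gamma}g_{2}g_{2}^{-1})$, so $\Gamma(G_{1}\oplus G_{2})=\Gamma G_{1}\oplus \Gamma G_{2}$ and therefore $(G_{1}\oplus G_{2})_{\Gamma}\cong (G_{1})_{\Gamma}\oplus (G_{2})_{\Gamma}$. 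Combined with Definition~5.2 this yields the first displayed isomorphism.

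For the second form I would invoke the formula $(G/H)_{\Gamma}=G/(H\cdot \Gamma G)$ established in the paragraph right after Corollary~5.2, applied to the $\Gamma$-group $G=SL(A)$ with normal $\Gamma$-subgroup $H=E(A)$ (normality and $\Gamma$-stability follow since $SL(A)$ and $E(A)$ are $\Gamma$-subgroups of $GL(A)$ and $E(A)\triangleleft SL(A)$). This produces the identification $(SL(A)/E(A))_{\Gamma}\cong SL(A)/(E(A)\cdot \Gamma SL(A))$, giving the second expression.

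For a field $F$ one has the classical Gauss-elimination identity $SL(F)=E(F)$, so the first summand $SL(F)/E(F)$ vanishes; what remains is $(F^{*})_{\Gamma}=F^{*}_{\Gamma}$, proving the last assertion. I do not expect any serious obstacle here; the only point requiring any care is the additivity of the coinvariant functor on componentwise actions, which is dispatched in the one line above, and the verification that the determinant splitting is $\Gamma$-equivariant, which has already been recorded in the discussion preceding the proposition.
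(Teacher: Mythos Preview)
Your proposal is correct and follows essentially the same line as the paper: the paper establishes the $\Gamma$-equivariant splitting $K_{1}(A)\cong SL(A)/E(A)\oplus A^{*}$ with componentwise action in the discussion immediately preceding the proposition, and then states the result as a direct consequence of applying $(-)_{\Gamma}$. You supply the routine verifications the paper leaves implicit---additivity of coinvariants on componentwise direct sums, the quotient formula $(G/H)_{\Gamma}=G/(H\cdot\Gamma G)$ from the paragraph after Corollary~5.2, and $SL(F)=E(F)$ for a field---so your argument is a faithful expansion of the paper's own reasoning.
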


For instance,  if we consider the nontrivial action of the cyclic group $\mathds{Z}_{2}$ of order 2 on the field $\mathds{C}$ of complex numbers, then $K^{\mathds{Z}_{2}}_{1}(\mathds{C})= \mathds{C}^{\ast} / \mathds{Z}_{2} \mathds{C}^{\ast}$, and the subgroup $\mathds{Z}_{2} \mathds{C}^{\ast}$ is generated by the elements $(a-bi)(a+bi)^{-1}, a+bi\neq 0$.

Let $St_{n}(A)$ be the $n$-th Steinberg group of the ring $A$ with generators $x^{n}_{ij}(a)$ and $i,j\leq n$, $a,b\in A$, satisfying the relations

$x^{n}_{ij}(a) . x^{n}_{ij}(b) = x^{n}_{ij}(a+b), i,j\leq n,$

$[x^{n}_{ij}(a) . x^{n}_{kl}(b)] = 1, i\neq l, k\neq j,$

$[x^{n}_{ij}(a) . x^{n}_{jk}(b)] = x^{n}_{ik}(ab), i\neq k.$

The given action of $\Gamma$ on $A$ induces the following action on the n-th Steiberg group $St_{n}(A)$ as $^{\gamma}(x^{n}_{ij}(a)) = x^{n}_{ij}(^{\gamma}a)$ which is compatible with its defining relations.

Passing to the direct limit,  $St(A) = (St_{n}(A))_{n\rightarrow \infty}$, one gets the action of $\Gamma$ on $St(A)$, and the isomorphism $(St(A))_{\Gamma}\approx ((St_{n}(A))_{\Gamma})_{n\rightarrow \infty}$.

It is evident that the homomorphism $St_{n}(A)\overset{\phi_{n}}\rightarrow E_{n}(A)$, sending $x^{n}_{ij}(a)$ $to$ $e^{n}_{ij}(a)$, is a homomorphism of $\Gamma$-groups. and $K_{2}(n,A)$ is a $\Gamma$-subgroup of $St_{n}(A)$.

Therefore, the well-known exact sequence $0\rightarrow K_{2}(n,A)\rightarrow St_{n}(A)\rightarrow E_{n}(A)\rightarrow 0$ induces the exact sequence

$(K_{2}(n,A))_{\Gamma}\rightarrow (St_{n}(A))_{\Gamma}\rightarrow (E_{n}(A))_{\Gamma}\rightarrow 0$.

After the direct limit, we finally obtain the following exact sequence

$(K_{2}(A))_{\Gamma}\rightarrow (St(A))_{\Gamma}\overset{\phi_{\Gamma}}\rightarrow (E(A))_{\Gamma}\rightarrow 0$.

\begin{defn}
For the unital ring A the $\Gamma$ - algebraic K-functor $K^{\Gamma}_{2}(A)$ is defined as $Ker \phi_{\Gamma}$.

\end{defn}

It is easily checked one has the natural surjection $(K_{2}(A))_{\Gamma}\rightarrow K^{\Gamma} _{2}(A)$, and the equality $\Gamma(K_{2}(A)) = K_{2}(A) / K_{2}(A)\cap \Gamma (St(A))$ holds.

 It follows immediately that $K^{\Gamma} _{2}(F) = 0$ for finite field F.

Let A be an unital ring and $Sym(A)$ be the symbol group of the ring $A$ generated by the elements $\{u,v\}$, $u,v \in A^{*}$, satisfying the following relations

$(S_{0})$     $\langle u,1-u\rangle = 1, u\neq 1, 1-u \in A^{*},$

$(S_{1})$     $\langle uu',v\rangle = \langle u,v\rangle \langle u',v\rangle$,

$(S_{2})$     $\langle u,vv'\rangle = \langle u,v\rangle \langle u,v'\rangle$.

where $A^{*}$ denotes the multiplicative group of invertible elements of the ring A [5]. By Matsumoto's theorem, the groups $Sym(A)$ and $K_{2}(A)$ are isomorphic when A is a field [11].

If $A$ is a $\Gamma$-ring, the action of $\Gamma$ induces a natural action on the group $Sym(A)$ given by $^{\gamma}\langle u,v\rangle = \langle^{\gamma}u,^{\gamma}v\rangle, u,v\in A^{\star}$.
The $\Gamma$-equivariant symbol group $Sym^{\Gamma}(A)$ is defined as $Sym^{\Gamma}(A) = (Sym(A))_{\Gamma}$.

In particular, if $\Gamma$ is the group $A^{*}$ of invertible elements of $A$, the action of $A^{*}$ on $A$ by conjugation induces an action on $Sym(A)$ given by $^{\gamma}\langle u,v\rangle = \langle \gamma u \gamma ^{-1},\gamma v \gamma ^{-1}\rangle , \gamma, u,v\in A^{\star}$. One gets the $A^{*}$-equivariant symbol group $Sym^{A^{*}}(A) = (Sym(A))_{A^{*}}$.

Now, Matsumoto's theorem for $\Gamma$-fields will be given. Namely,

\begin{thm}
If F is a $\Gamma$-field, then the sequence
\begin{equation}
0\rightarrow Ker \Gamma(\phi) / \Gamma Ker(\phi)\rightarrow Sym^{\Gamma}(F)\rightarrow K^{\Gamma}_{2}(F)\rightarrow 0,
\end{equation}
is exact, where $\phi$ denotes the canonical surjection $St(F)\rightarrow E(F)$.
\end{thm}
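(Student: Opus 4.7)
The plan is to deduce this directly from Matsumoto's classical theorem together with the six-term-style exact sequence established in Proposition 5.1 of this section. The starting point is the short exact sequence of $\Gamma$-groups
\begin{equation*}
0\to K_{2}(F)\to St(F)\overset{\phi}\to E(F)\to 0,
\end{equation*}
whose $\Gamma$-equivariance follows because both $\phi$ and its kernel are defined by identities preserved by any ring automorphism of $F$.

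First I would apply Proposition 5.1 to the surjection $\phi:St(F)\to E(F)$. This immediately produces the exact sequence
\begin{equation*}
0\to Ker\,\Gamma(\phi)/\Gamma Ker(\phi)\to (K_{2}(F))_{\Gamma}\to (St(F))_{\Gamma}\overset{\phi_{\Gamma}}\to (E(F))_{\Gamma}\to 0.
\end{equation*}
By Definition 5.4, $K^{\Gamma}_{2}(F)=Ker\,\phi_{\Gamma}$, so truncating the above sequence at $Ker\,\phi_{\Gamma}$ gives
\begin{equation*}
0\to Ker\,\Gamma(\phi)/\Gamma Ker(\phi)\to (K_{2}(F))_{\Gamma}\to K^{\Gamma}_{2}(F)\to 0.
\end{equation*}

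Next I would invoke the classical Matsumoto theorem, which provides a canonical isomorphism $Sym(F)\overset{\cong}\to K_{2}(F)$ sending $\langle u,v\rangle$ to the corresponding Steinberg symbol. I would check that this isomorphism is $\Gamma$-equivariant: both the defining relations $(S_0),(S_1),(S_2)$ of $Sym(F)$ and the Steinberg symbol in $K_{2}(F)$ are preserved by any ring automorphism of $F$, and on generators we have $^{\gamma}\langle u,v\rangle=\langle{}^{\gamma}u,{}^{\gamma}v\rangle$ on both sides. Passing to coinvariants therefore yields a $\Gamma$-equivariant isomorphism $Sym^{\Gamma}(F)=(Sym(F))_{\Gamma}\cong (K_{2}(F))_{\Gamma}$.

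Substituting this identification into the preceding short exact sequence produces the required sequence
\begin{equation*}
0\to Ker\,\Gamma(\phi)/\Gamma Ker(\phi)\to Sym^{\Gamma}(F)\to K^{\Gamma}_{2}(F)\to 0.
\end{equation*}
The only nontrivial step I expect to require care is verifying that the Matsumoto isomorphism is genuinely $\Gamma$-equivariant, since everything else is a formal consequence of Proposition 5.1 and the definition of $K^{\Gamma}_{2}$. Functoriality of Matsumoto's construction under field automorphisms handles this, so the argument reduces to citing the classical theorem and tracking the action of $\Gamma$ through the generators and relations.
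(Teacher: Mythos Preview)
Your proposal is correct and follows essentially the same route as the paper's own proof: both arguments reduce the statement to applying Proposition~5.1 to the surjection $\phi:St(F)\to E(F)$ and then identifying $(K_{2}(F))_{\Gamma}$ with $Sym^{\Gamma}(F)$ via the $\Gamma$-equivariance of Matsumoto's isomorphism. Your write-up is somewhat more explicit about the truncation step using the definition $K^{\Gamma}_{2}(F)=Ker\,\phi_{\Gamma}$, but the logical content is identical.
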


$Proof$. As we see, the action of the group $\Gamma$ on the field $F$ induces its action on $Sym(A)$ and also on $K_{2}(F)$ as a subgroup of the Steinberg group $St(F)$. It is easily checked that the Matsumoto's isomorphism $Sym(F) \cong K_{2}(F)$ is compatible with the action of $\Gamma$. Therefore, this implies the isomorphism $(Sym(F))_{\Gamma} \cong (K_{2}(F))_{\Gamma}$. It remains to apply Proposition 5.1 for the homomorphism $\phi$. This completes the proof.

Let $A$ be a non-commutative local ring such that $A/Rad(A) \neq F_{2}$. Consider A as a $\Gamma$-ring, where $\Gamma$ is the group $A^{\ast}$ of units acting on A by conjugation.

 There is an exact sequence [5,10]
 $$
 0\rightarrow K_{2}(A)\rightarrow U(A)\overset{\tau}\rightarrow [A^{*}, A^{*}]\rightarrow e,
 $$
 where the group $U(A)$ is generated by the elements $\langle u,v\rangle$,$u,v \in A^{*}$, satisfying the following relations

$(U_{0})$        $\langle u, 1-u\rangle = 1, u\neq 1, 1-u \in A^{*},$

$(U_{1})$        $\langle uv,w \rangle = ^{u}\langle v,w \rangle \langle u,w\rangle$,

$(U_{2})$        $\langle u,vw\rangle \langle v,wu\rangle \langle w,uv\rangle = 1$,

and $^{u}\langle v,w \rangle = \langle uvu^{-1},uwu^{-1} \rangle$ [5].

Generalization of Matsumoto's theorem for non-commutative local $\Gamma$-rings.

\begin{thm}
If A is a  non-commutative local $A^{*}$-ring such that $A/Rad(A) \neq F_{2}$, there is an exact sequence
$$
0\rightarrow Ker \Gamma(\tau) / \Gamma(K_{2}(A))\rightarrow (K_{2}(A))_{A^{*}}\rightarrow Sym^{A^{*}}(A)\rightarrow
$$
$$
[A^{*}, A^{*}] / \Gamma([A^{*}, A^{*}])\rightarrow 0,
$$
where $A^{*}$ is the group of units of $A$ acting on $A$ by conjugation.
\end{thm}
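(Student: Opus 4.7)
The plan is to view the sequence
\[
0\rightarrow K_{2}(A)\rightarrow U(A)\overset{\tau}\rightarrow [A^{*}, A^{*}]\rightarrow e
\]
from [5,10] as an exact sequence of $A^{*}$-groups (where $A^{*}$ acts by conjugation) and then to apply Proposition 5.1 to the surjection $\tau$. The desired four-term exact sequence will then be obtained essentially by reading off the conclusion of that proposition and making the appropriate identifications.

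First I would verify that each term of the sequence carries a natural $A^{*}$-action compatible with $\tau$. Conjugation by a unit $a\in A^{*}$ preserves commutators in $A^{*}$, so it acts on $[A^{*},A^{*}]$. On $U(A)$ the action $^{a}\langle u,v\rangle=\langle aua^{-1},ava^{-1}\rangle$ is well defined: one checks it respects each of the relations $(U_{0}),(U_{1}),(U_{2})$, using the fact that conjugation distributes over the inner action $^{u}\langle v,w\rangle$ appearing in $(U_{1})$. Once $\tau$ is verified to be $A^{*}$-equivariant, $K_{2}(A)=\mathrm{Ker}\,\tau$ inherits an $A^{*}$-action making the whole sequence one of $A^{*}$-groups.

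Next I would apply Proposition~5.1 with $G=U(A)$, $G'=[A^{*},A^{*}]$, $\Gamma=A^{*}$. The proposition yields
\[
0\rightarrow \mathrm{Ker}\,\Gamma(\tau)/\Gamma\,\mathrm{Ker}\,\tau \rightarrow (\mathrm{Ker}\,\tau)_{A^{*}}\rightarrow U(A)_{A^{*}}\rightarrow [A^{*},A^{*}]_{A^{*}}\rightarrow 0.
\]
Now I identify $\mathrm{Ker}\,\tau=K_{2}(A)$, so $\Gamma\,\mathrm{Ker}\,\tau=\Gamma(K_{2}(A))$ and $(\mathrm{Ker}\,\tau)_{A^{*}}=(K_{2}(A))_{A^{*}}$; by the definition of $Sym^{A^{*}}(A)$ in the non-commutative setting we have $U(A)_{A^{*}}=Sym^{A^{*}}(A)$; and by the formula $(G/H)_{\Gamma}=G/(H\cdot \Gamma G)$ derived just after Corollary~5.2 (applied with $H=e$ acting trivially), one gets $[A^{*},A^{*}]_{A^{*}}=[A^{*},A^{*}]/\Gamma([A^{*},A^{*}])$. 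Substituting these identifications produces the claimed four-term exact sequence.

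The only non-trivial part is the compatibility check for $U(A)$, since the relation $(U_{1})$ already involves an internal conjugation action; but because the outer $A^{*}$-action commutes with inner conjugation in a group, this reduces to routine bookkeeping. Beyond that, the proof is a direct invocation of Proposition~5.1 and the quotient formula from the preliminaries, exactly paralleling the proof of Theorem~5.4 for the commutative case.
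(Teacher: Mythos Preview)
Your proposal is correct and follows essentially the same approach as the paper: verify that the Guin exact sequence is one of $A^{*}$-groups under the conjugation action, then apply Proposition~5.1 to the surjection $\tau$ and identify the resulting terms. One small correction: the identification $(U(A))_{A^{*}}\cong Sym^{A^{*}}(A)$ is not ``by definition''---the paper defines $Sym^{A^{*}}(A)=(Sym(A))_{A^{*}}$, and then cites results of [5,10] to the effect that the surjective $A^{*}$-homomorphism $U(A)\to Sym(A)$ induces an isomorphism on $A^{*}$-coinvariants.
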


$Proof$. It is easily checked that the group $U(A)$ becomes $A^{*}$-group with respect to the action $^{\gamma}\langle v,w \rangle = \langle \gamma v \gamma^{-1},\gamma w \gamma^{-1} \rangle$    and by results of [5,10] there is a surjective $A^{*}$-homomorphism $U(A)\rightarrow Sym(A)$. Moreover, it is proven that the group $(Sym(A))_{A^{}\star}$ is abelian and isomorphic to $(U(A))_{A^{\star}}$. One gets the exact sequence $(K_{2}(A))_{\Gamma}\rightarrow (U(A))_{\Gamma}\overset{\tau_{\Gamma}}\rightarrow [A^{*}, A^{*}] / \Gamma([A^{*}, A^{*}])$. By applying Proposion 5.1 again for the homomorphism $\tau$, we obtain the needed exact sequence.

\begin{defn}
Let $\mathds{A}$ be an arbitrary category. It is said a group $\Gamma$ is acting on $\mathds{A}$, if for any pair (A,B) of objects of $\mathds{A}$
the corresponding set H(A,B) of morphisms is a $\Gamma$-set such that for any $f:A\rightarrow B$ and $g:B\rightarrow C$ the following equality hold:
 $^{\gamma}(gf) = ^{\gamma}(g)^{\gamma}(f)$, and $^{\gamma}(1_{A}) = 1_{A}$ for any unit morphism $1_{A}: A\rightarrow A$ of the category $\mathds{A}$,
 and any element $\gamma\in \Gamma$. This category will be denoted $\mathds{A}^{\Gamma}$ and called $\Gamma$-category.
\end{defn}

Examples.

The category $\mathcal{G}r^{\Gamma}$ of $\Gamma$-groups is the category of groups on which the group $\Gamma$ is acting. The category $\mathcal{R}^{\Gamma}$ of $\Gamma$-rings
is the category of rings on which the group $\Gamma$ is acting.

\begin{defn}
It is said that a covariant functor $T:\mathds{A}^{\Gamma}\rightarrow \mathcal{G}r^{\Gamma}$ is given, if T is a covariant functor from the category $\mathds{A}$
to the category $\mathcal{G}r$ of groups such that $T(^{\gamma}(f)) = ^{\gamma}(T(f))$ for any morphism f of $\mathds{A}$ and any element $\gamma$ of $\Gamma$.
\end{defn}

\begin{defn}
It is said that $(\mathrm{F}, \tau, \delta)$ is a cotriple of the $\Gamma$-category $\mathds{A}^{\Gamma}$ if it is a cotriple of the category $\mathds{A}$, the functor $F$ is compatible with the action of $\Gamma$, and the morphisms $\tau, \delta$ are $\Gamma$-morphisms.
\end{defn}

Let $(\mathrm{F}, \tau, \delta)$ be a cotriple on the category  $\mathds{A}^{\Gamma}$, where $\tau:F(A)\rightarrow A, \delta:F(A)\rightarrow F(F(A))$. It induces the augmented simplicial object

$$
F^{+}_{\star}(A) = F_{\star}(A)\overset{\tau}\rightarrow A,
F_{0}(A) = F(A), F_{n} = F(F_{n-1}(A)) for \geq 1,
$$

$
\lambda^{n}_{i} = F_{i}\tau F_{n-i},
s^{n}_{i} = F_{i}\delta F_{n-i}
$
for $0\leq i\leq n$.

\;

Let $L_{\ast} = (L_{n}, \lambda^{n}_{i}, s^{n}_{i}, n\geq 0, n\in \mathds{Z})$ be a simplicial group with bord and degeneracy operators  $\lambda^{n}_{i}$ and $s^{n}_{i}$ respectively.

\begin{defn}
It is said that the group $\Gamma$ is acting on the simplicial group $L_{\ast}$, if it is acting on the groups $L_{n}, n\geq 0, n\in \mathds{Z}$,
 such that the homomorphisms $\lambda^{n}_{i}, s^{n}_{i}, n\geq 0, n\in \mathds{Z}$, are $\Gamma$-homomorphisms. The quotient simplicial group $L^{\Gamma}_{\ast}$ of $L_{\ast}$,
$L^{\Gamma}_{\ast} = ((L_{n})_{\Gamma}, \lambda^{n}_{i,\Gamma}, s^{n}_{i,\Gamma}, n\geq 0, n\in \mathds{Z})$, where the homomorphisms $\lambda^{n}_{i,\Gamma}$ and $s^{n}_{i,\Gamma}$ are induced by $\lambda^{n}_{i}$ and $s^{n}_{i}$ respectively, is called Connes simplicial group of $L_{\ast}$ with respect to the action of $\Gamma$.

 The action of $\Gamma$ on the simplicial group $L_{\ast}$ induces its action on the homotopy groups of $L_{\ast}$, and the homotopy groups $\pi_{n}(L^{\Gamma}_{\ast}), n\geq 0, n\in \mathds{Z}$, of the Connes simplicial group $L^{\Gamma}_{\ast}$ of $L_{\ast}$, are called $\Gamma$-equivariant homotopy groups of the simplicial group $L_{\ast}$.
\end{defn}

It is evident that the action of the group $\Gamma$ on pseudo-simplicial groups [7] is defined analogously.

A similar definition of the group action on a chain complex of modules was given in [10]. Its quotient chain complex could be called Connes chain complex. The homology groups of the quotient chain complex were called $\Gamma$-equivariant homology groups of the chain complex.

\begin{rem}

Connes complex firstly appear for Hochschild complex under the cyclic group action [11] which is the action of the group of integers on the Hochschild complex [10].
\end{rem}
One has the exact sequence

$$
... \rightarrow \pi_{n+1}(L^{\Gamma}_{\ast})\rightarrow \pi_{n}(\Gamma L_{\ast})\rightarrow \pi_{n}(L_{\ast})\rightarrow \pi_{n}(L^{\Gamma}_{\ast})\rightarrow \pi_{n-1}(\Gamma L_{\ast})\rightarrow ...,
$$

inducing the exact sequence

\begin{equation}
\pi_{n}(\Gamma L_{\ast})\rightarrow (\pi_{n}(L_{\ast}))_{\Gamma}\rightarrow \pi_{n}(L^{\Gamma}_{\ast})\rightarrow \pi_{n-1}(\Gamma L_{\ast}).
\end{equation}

The action of the group $\Gamma$ on the simplicial group $L_{\ast}$ induces an action of $\Gamma$ on the first sequence implying the second exact sequence. It is evident the sequence similar to the sequenc 5.3 holds for the augmented $\Gamma$-simplicial group $L^{+}_{\ast}$.

\begin{defn}
Let $T$ be a covariant functor from the $\Gamma$-category $\mathds{A}^{\Gamma}$ to the category $\mathcal{G}r^{\Gamma}$ of $\Gamma$-groups. The non-abelian left $\Gamma$-derived functors $L^{\Gamma}T_{n}(A)$ of $T$ with respect to the cotriple $\mathds{F} = (\mathrm{F}, \tau, \delta)$ are defined as follows:
$$
 L^{\Gamma}T_{n}(A) = \pi_{n}((TF_{\ast}(A))^{\Gamma}), n\geq 0.
$$
\end{defn}

Non-abelian $\Gamma$-derived functors of the functor $T$ could also be defined with respect to the projective class of the category $\mathds{A}^{\Gamma}$ in the sense of Eilenberg - Moore by using $\Gamma$-pseudo-simplicial groups. All these definitions of non-abelian $\Gamma$-derived functors generalized the well-known non-abelian derived functors of covariant functors when the group $\Gamma$ is acting trivially [7].

Let $G$ be a $\Gamma$-group and $A$ be a $\Gamma$-ring, and let $F(G)$ and $F(A)$ be the free group and the free ring, generated by the set $(|g|), g\in G$, and by the set $(|a|), a\in A$, respectively. Define the action of the group $\Gamma$ on $F(G)$ by $^{\gamma}|g| = |^{\gamma}g|$, and on $F(A)$ by $^{\gamma}|a| = |^{\gamma}a|, \gamma\in \Gamma$. Then the free cotriples induced by $G$ and $A$ become cotriples of the categories $\mathcal{G}r^{\Gamma}$ and $\mathcal{R}^{\Gamma}$ respectively.

Let $A$ be a unital $\Gamma$-ring and $GL(A)$ the induced $\Gamma$-group. We are going to define Swan's $\Gamma$-algebraic K-functors as $\Gamma$-derived functors of the functor $Gl(-)$ with respect to the free cotriple, namely

\begin{defn}
For a $\Gamma$-ring $A$ the Swan's $\Gamma$-algebraic K-functors $K_{n}^{S,\Gamma}(A)$ are defined as follows

$K_{n}^{S,\Gamma}(A) =  \pi_{n-2}(GLF_{\ast}(A))^{\Gamma}$ for $n\succeq 3$,

and the groups $K_{1}^{S,\Gamma}(A)$, $K_{2}^{S,\Gamma}(A)$ are defined by the exact sequence
$0\rightarrow K_{2}^{S,\Gamma}(A)\rightarrow  \pi_{0}(GLF_{\ast}(A))^{\Gamma}\rightarrow (GL(A))_{\Gamma}\rightarrow K_{1}^{S,\Gamma}(A)\rightarrow 0,$

where $(GLF_{\ast}(A))^{\Gamma}$ is the Connes simplicial group of the $\Gamma$-simplicial group $GLF_{\ast}(A)$ induced by the  $\Gamma$-ring $A$.
\end{defn}

To define Swan's relative $\Gamma$-algebraic K-functors consider the augmented simplicial ring $F^{+}_{\ast}(A)=F_{\ast}(A)\overset{\tau}\rightarrow A$ for the $\Gamma$-ring $A$.

Let $f:A\rightarrow B$ be a surjective homomorphism of $\Gamma$-rings with unit. It induces a surjective morphism of augmented $\Gamma$-simplicial groups $GL(F^{+}_{\ast}(f):GL(F^{+}_{\ast}(A))\rightarrow GL(F^{+}_{\ast}(B))$, and therefore the morphism $(GL(F^{+}_{\ast}(f))_{\Gamma}:(GL(F^{+}_{\ast}(A)))^{\Gamma}\rightarrow (GL(F^{+}_{\ast}(B)))^{\Gamma}$.

 Define $K^{S,\Gamma}_{n}(A,I)=\pi_{n}(Ker((GL(F^{+}_{\ast}(f))_{\Gamma})), n> 2$, where $I=Ker f$. The groups $K_{1}^{S,\Gamma}(A,I)$ and $K_{2}^{S,\Gamma}(A,I)$ are defined by the exact sequence: $0\rightarrow K_{2}^{S,\Gamma}(A,I)\rightarrow  \pi_{0}Ker(GLF_{\ast}(f))^{\Gamma}\rightarrow Ker (GL(f))_{\Gamma}\rightarrow K_{1}^{S,\Gamma}(A,I)\rightarrow 0$. When the group $\Gamma$ is trivially acting on $A$, we recover the Swan's algebraic K-functors [17].

We also propose the definition of Quillen's $\Gamma$-algebraic K-functors by the following way.

Consider the free resolution $F_{\ast}(A)\overset{\tau}\rightarrow A$ of the unital $\Gamma$-ring $A$. The map $\tau$ induces a map of simplicial rings $\varphi_{\ast}: F_{\ast}\rightarrow (A)_{\ast}$, where $(A)_{\ast}$ is the constant simplicial ring, $A_{n}=A$ for all $n\geq 0$, and $\varphi_{n}= \tau \tilde{\lambda}^{n-1}_{0}, \tilde{\lambda}^{n-1}_{0}= \lambda^{1}_{0}\cdot\cdot\cdot\lambda^{n-1}_{0}$  for $n\geq 1$, and $\varphi_{0}=\tau$. Denote $I_{\ast}$ the kernel of $\varphi_{\ast}$ which is an acyclic ideal of $F_{\ast}$.

Let $A$ be a unital $\Gamma$-ring. The action of $\Gamma$ induces the action on the simplicial ring $I_{\ast}$, then on the simplicial group $GL(I_{\ast})$ and therefore the action on the space $BGL(I_{\ast})$ which is a homeomorphim, and if $f: [0,1]\rightarrow BGL(I_{\ast})$ is a continuous map, $(^{\gamma}f)(t)=^{\gamma}(f(t)), t\in  [0,1]$. This space could be used for the definition of Quillen's K-groups, since we have $K^{Q}_{n}(A)\cong \pi_{n-1}(BGL(I_{\ast})), n> 1$. The action of $\Gamma$  on $BGL(I_{\ast})$ induces its action on the homotopy groups of $BGL(I_{\ast})$,  therefore on $K^{Q}_{n}(A)$, and one has the isomorphism $(K^{Q}_{n}(A))_{\Gamma}\cong (\pi_{n-1}(BGL(I_{\ast})))_{\Gamma}, n> 1$.

Finally, we arrive to the definition of Quillen's algebraic $\Gamma$-functors $K^{Q,\Gamma}_{n}(A), n> 1$, as equal to the homotopy group $\pi_{n-1}(BGL(I_{\ast}) / \Gamma)$ of the quotient space $BGL(I_{\ast}) / \Gamma$ by the action of the discrete group $\Gamma$.

\begin{thm}
1. For a homomorphism $f:A\rightarrow B$ of $\Gamma$-rings with unit there is a long exact sequence

$\cdot\cdot\cdot\rightarrow K^{S,\Gamma}_{n+1}(B)\rightarrow K^{S,\Gamma}_{n}(A,I)\rightarrow K^{S,\Gamma}_{n}(A)\rightarrow K^{S,\Gamma}_{n}(B)\rightarrow \cdot\cdot\cdot \rightarrow  K^{S,\Gamma}_{2}(B)\rightarrow K^{S,\Gamma}_{1}(A,I)\rightarrow K^{S,\Gamma}_{1}(A)\rightarrow K^{S,\Gamma}_{1}(B)$,

 where $I=Ker f$, and one has a natural homomorphism $K^{S,\Gamma}_{n}(I)\rightarrow  K^{S,\Gamma}_{n}(A,I), n\geq 1$;

2. If A is a free ring on which a group $\Gamma$ is acting, then $K^{S,\Gamma}_{n}(A)=o$ for $n\geq 1$;

3. The relation between Swan's and Quillen's $\Gamma$-algebraic K-functors is expressed by the following exact sequence

$\pi_{n-2}(\Gamma GLF^{+}_{\ast}(A))\rightarrow (K^{Q}_{n}(A))_{\Gamma}\rightarrow K^{S,\Gamma}_{n}(A)\rightarrow \pi_{n-3}(\Gamma GLF^{+}_{\ast}(A))$,

$n> 1$, $F^{+}_{\ast}(A):F_{\ast}(A)\rightarrow A$ is the free resolution of the $\Gamma$-ring A;

4. There is the central exact sequence  $0\rightarrow K^{\Gamma}_{2}(A)\rightarrow K^{S,\Gamma}_{2}(A)\rightarrow Ker \nu_{\Gamma}\rightarrow e$, where $\nu_{\Gamma}$ is induced by the inclusion $\nu:E(A)\rightarrow GL(A)$.
\end{thm}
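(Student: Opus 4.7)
The plan is to handle the four parts in turn, reducing each to the general simplicial machinery already developed (sequence 5.3 and Definitions 5.7, 5.10, 5.11) plus the classical facts about the free cotriple.

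For part (1), I would apply the long exact sequence of homotopy groups of a fibration of simplicial groups to
$$Ker(GL(F^{+}_{*}(f))_{\Gamma})\rightarrow (GL(F^{+}_{*}(A)))^{\Gamma}\rightarrow (GL(F^{+}_{*}(B)))^{\Gamma}.$$
Since $f$ is surjective the map $GL(F^{+}_{*}(f))$ is a surjective morphism of $\Gamma$-simplicial groups, and surjectivity is preserved by passing to the Connes quotient, so this is a short exact sequence (hence a Kan fibration) of simplicial groups and the long exact sequence on $\pi_n$ is available. Translating $\pi_{n-2}$ into $K^{S,\Gamma}_{n}$ via Definition 5.11 (and splicing in the defining four-term sequences for $K^{S,\Gamma}_1$, $K^{S,\Gamma}_2$) yields the displayed sequence; the natural map $K^{S,\Gamma}_{n}(I)\to K^{S,\Gamma}_{n}(A,I)$ comes from applying $GL$ to the inclusion $F_{*}(I)\hookrightarrow Ker(F_{*}(f))$.

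For part (2), if $A$ is itself a free $\Gamma$-ring then the unit $A\to F(A)$ of the free cotriple is a $\Gamma$-equivariant section of $\tau$, so the augmented simplicial ring $F^{+}_{*}(A)$ is $\Gamma$-equivariantly contractible, hence so is $GLF^{+}_{*}(A)$ and therefore also its Connes quotient $(GLF^{+}_{*}(A))^{\Gamma}$. All higher homotopy groups vanish and the four-term sequence defining $K^{S,\Gamma}_{1}, K^{S,\Gamma}_{2}$ collapses.

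For part (3), apply the exact sequence (5.3) with $L_{*}=GLF^{+}_{*}(A)$, and use $K^{Q}_{n}(A)\cong\pi_{n-2}(GLF^{+}_{*}(A))$ (which is the simplicial translation of $\pi_{n-1}(BGL(I_{*}))$); the action of $\Gamma$ on $L_{*}$ is exactly the one induced by the action on $A$, so passing to $(-)_{\Gamma}$ on $\pi_{n-2}(L_{*})$ gives $(K^{Q}_{n}(A))_{\Gamma}$, and $\pi_{n-2}(L^{\Gamma}_{*})=K^{S,\Gamma}_{n}(A)$ by definition. For part (4), use the functorial comparison between the cotriple resolution $GLF_{*}(A)$ and the Steinberg–elementary extension: the canonical surjection $St(A)\twoheadrightarrow E(A)\hookrightarrow GL(A)$ lifts (via freeness of the cotriple) to give a map whose $\pi_{0}$ of Connes quotient identifies $Ker(\pi_{0}(GLF_{*}(A))^{\Gamma}\to (GL(A))_{\Gamma})$ with a central extension of $Ker\,\nu_{\Gamma}$ by $(K_{2}(A))_{\Gamma}$; applying Proposition 5.1 to $\phi\colon St(A)\to E(A)$ lets one rewrite the kernel as $K^{\Gamma}_{2}(A)$ in the sense of Definition 5.4, and centrality follows from the classical fact that $K_{2}(A)$ lies in the center of $St(A)$, a property preserved by the $\Gamma$-coinvariant quotient.

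The hardest step will be the identification required in part (4): one must check that the map $\pi_{0}(GLF_{*}(A))^{\Gamma}\to (GL(A))_{\Gamma}$ has kernel precisely the central extension of $Ker\,\nu_{\Gamma}$ by the non-stable $K^{\Gamma}_{2}(A)$ defined via $Ker\,\phi_{\Gamma}$, rather than by the \emph{a priori} larger group $(K_{2}(A))_{\Gamma}$; this is exactly where Proposition 5.1 applied to $\phi$ supplies the correcting $Ker\Gamma(\phi)/\Gamma Ker\phi$ term and explains why the central extension is by $K^{\Gamma}_{2}(A)$ itself.
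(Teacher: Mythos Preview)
Your approach to parts (1) and (3) matches the paper's: the long exact sequence comes from the short exact sequence of Connes simplicial groups, and the comparison with Quillen's functors is exactly sequence (5.3) applied to $L_*=GLF^{+}_{*}(A)$ together with the chain of identifications $(K^{Q}_{n}(A))_{\Gamma}\cong(\pi_{n-1}BGL(I_*))_{\Gamma}\cong(\pi_{n-2}GLF^{+}_{*}(A))_{\Gamma}$.

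Part (2) has a genuine gap. The hypothesis is that $A$ is a \emph{free ring on which $\Gamma$ acts}, not that $A$ is $\Gamma$-free (free on a $\Gamma$-set). Your argument needs the unit $A\to F(A)$ to be a $\Gamma$-equivariant section, and that holds only in the $\Gamma$-free case: if $A=F(X)$ with $\Gamma$ acting arbitrarily on $A$, a generator $x\in X$ need not be sent by $\gamma$ to another basis element, so the section $x\mapsto |x|$ is not $\Gamma$-equivariant and the cotriple resolution is not $\Gamma$-contractible. The paper handles this in two steps: first the $\Gamma$-free case exactly as you say, and then for an arbitrary free ring $F(X)$ with $\Gamma$-action it observes that $F(F(X))$ \emph{is} $\Gamma$-free (its basis is the underlying $\Gamma$-set of $F(X)$), and the $\Gamma$-homomorphisms $F(X)\xrightarrow{\sigma}F(F(X))\xrightarrow{\tau}F(X)$ exhibit $K^{S,\Gamma}_{n}(F(X))$ as a retract of $K^{S,\Gamma}_{n}(F(F(X)))=0$. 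You are missing this retract step.

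For part (4) you are working harder than necessary. The paper simply asserts (and this is the key computation) that $\pi_{0}((GLF_{*}(A))^{\Gamma})\cong (St(A))_{\Gamma}$. Granting this, $K^{S,\Gamma}_{2}(A)$ is the kernel of $(St(A))_{\Gamma}\to(GL(A))_{\Gamma}$, which factors as $(St(A))_{\Gamma}\xrightarrow{\phi_{\Gamma}}(E(A))_{\Gamma}\xrightarrow{\nu_{\Gamma}}(GL(A))_{\Gamma}$; since $\phi_{\Gamma}$ is surjective this kernel is automatically an extension of $Ker\,\nu_{\Gamma}$ by $Ker\,\phi_{\Gamma}=K^{\Gamma}_{2}(A)$, directly from Definition~5.6. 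There is no intermediate appearance of $(K_{2}(A))_{\Gamma}$ to correct, and Proposition~5.1 is not invoked here. Centrality then follows from $K_{2}(A)\subset Z(St(A))$, as you note.
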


$Proof$. 1. Induced by the short exact sequence of augmented $\Gamma$-simplicial groups $e\rightarrow Ker((GL(F^{+}_{\ast}(f))_{\Gamma})\rightarrow (GL(F^{+}_{\ast}(A)))_{\Gamma}\rightarrow (GL(F^{+}_{\ast}(B)))_{\Gamma}\rightarrow e$; the homomorphism $K^{S,\Gamma}_{n}(I)\rightarrow  K^{S,\Gamma}_{n}(A,I)$ is induced by the composite of morphisms $(GL(F^{+}_{\ast}(I)))_{\Gamma}\rightarrow (GL(F^{+}_{\ast}(f))_{\Gamma}\rightarrow (GL(F^{+}_{\ast}(A)))_{\Gamma}$ for $n\geq 1$.

2. First it will be shown when A is $\Gamma$-free ring. In that case the augmented $\Gamma$-simplicial group $GLF^{+}_{\ast}(A)$ is left contractible and therefore all its homotopy groups are trivial. Let $F(X)$ be a free group with basis the subset $X$ on which the group $\Gamma$ is acting, then $F(F(X))$ is a $\Gamma$-free group, and consider the $\Gamma$-homomorphisms $F(X)\overset{\sigma}\rightarrow F(F(X))\overset{\tau}\rightarrow F(X)$ induced by the injection $X\rightarrow F(X)$ and the identity map $F(X)\rightarrow F(X)$ respectively. Therefore $K^{S,\Gamma}_{n}(F(X))$ is isomorphic to a subgroup of $K^{S,\Gamma}_{n}(F(F(X))$ which is trivial for $n\geq 1$.

3. By using the sequence 5.4 for the augmented $\Gamma$-simplicial group $GL(F^{+}_{\ast}(A))$ we obtain the following exact sequence $\pi_{n-2}(\Gamma GL(F^{+}_{\ast}(A))\rightarrow (\pi_{n-2}(GL(F^{+}_{\ast}(A)))_{\Gamma}\rightarrow \pi_{n-2}((GL(F^{+}_{\ast}(A)))_{\Gamma})\rightarrow \pi_{n-3}(\Gamma GL(F^{+}_{\ast}(A)))$, $n> 1$, implying the exact sequence
 $\pi_{n-2}(\Gamma GL(F^{+}_{\ast}(A))\rightarrow (K^{S}_{n}(A))_{\Gamma}\rightarrow K^{S,\Gamma}_{n}(A)\rightarrow \pi_{n-3}(\Gamma GL(F^{+}_{\ast}(A)))$, $n> 1$. One has the isomorphisms $(K^{Q}_{n})_{\Gamma}\cong (\pi_{n-1}BGL(I_{\ast}))_{\Gamma}\cong  (\pi_{n-2}GL(I_{\ast}))_{\Gamma}\cong (\pi_{n-2}GLF^{+}_{\ast}(A)_{\Gamma}\cong (K^{S}_{n}(A))_{\Gamma}, n> 1$, and we get the required exact sequence.

 4. It is easily checked $\pi_{0}((GL(F^{+}_{\ast}(A)))_{\Gamma})$ is isomorphic to $(St(A))_{\Gamma}$. Therefore $K^{S}_{2}(A)$ is isomorphic to the kernel of the homomorhism $(St(A))_{\Gamma}\rightarrow (GL(A))_{\Gamma}$, implying the required exact sequence which is central, since $K_{2}(A)$ is the center of $St(A)$. This completes the proof of the theorem.

\begin{rem}
Let A be a $\Gamma$-ring without unit and $A+\mathds{Z}$ be the unitization of A. The action of $\Gamma$ on A induces its action on $A+\mathds{Z}$ by $^{\gamma}(a,z)=(^{\gamma}a,z), a\in A, \gamma \in \Gamma$. Then there is the short exact sequence $0\rightarrow (K^{Q}_{n}(A))_{\Gamma}\rightarrow (K^{Q}_{n}(A+\mathds{Z}))_{\Gamma}\rightarrow (K^{Q}_{n}(\mathds{Z}))_{\Gamma}\rightarrow 0$ induced by the canonical $\Gamma$-homomorphisms $i:A\rightarrow A+\mathds{Z}$ and $p:A+\mathds{Z}\rightarrow \mathds{Z}$.
\end{rem}

For a commutative $\Gamma$-ring A with unit the product operation
$$
K^{\Gamma}_{1}(A) \otimes K^{\Gamma}_{1}(A)\rightarrow  K^{\Gamma}_{2}(A)
$$
is defined. For this aim we will follow Milnor's construction of the map $GL(A) \times GL(A)\rightarrow K_{2}(A)$ by using the symbol $[x,y]$, $x,y\in GL(A)$ given in [13]. Consider the canonical homomorphism $\tau: St(A)\rightarrow E(A)$ and let $x,y \in E(A)$ such that $xy = yx$. There is a correctly defined element
of $K_{2}(A)$ denoted $x\ast y$ by taking $v,z \in St(A)$ such that $\tau (v) = x, \tau (z) = y$. Then $x\ast y = vzv^{-1}z^{-1}$ and $\tau(vzv^{-1}z^{-1}) = e$.

Now the symbol $[x,y]$ is defined as follows (see [13]).
For $x\in GL_{m}(A)$ and $y\in GL_{n}(A)$ denote $x\otimes y$ the corresponding matrix to the induced automorphism of $A^{m}\otimes A^{n}$. Let I denote the $m\times m$ identity matrix and $I'$ be the $n\times n$ identity matrix. The matrices $diag(x\otimes I';x^{-1}\otimes I';I\otimes I')$ and $diag(I\otimes y,I\otimes I';I\otimes y^{-1})$ of $E_{3mn}(A))$ commute with each other, and the symbol $[x,y]$ is equal to
$$
diag(x\otimes I';x^{-1}\otimes I';I\otimes I')\ast diag(I\otimes y,I\otimes I';I\otimes y^{-1}) \in K_{2}(A).
$$
 The map $GL(A) \times GL(A)\rightarrow K_{2}(A)$ induced by the symbol $[x,y]$ is skew-symmetric, bimultiplicative, and gives rise to a homomorphism $K_{1}(A)\times K_{1}(A)\rightarrow K_{2}(A)$
[13]. Moreover, the following properties hold:

If u and $1-u$ are units, then $[u,1-u] = 1$ and $[u,-u] = 1$.

We are able now to show that the defined map $GL(A)\times GL(A)\rightarrow K_{2}(A)$ is a $\Gamma$-map if the ring A is a $\Gamma$-ring. If $\gamma \in \Gamma$, one has $^{\gamma}(x,y) = (^{\gamma}x,^{\gamma}y)$ and $^{\gamma}(x\ast y) = ^{\gamma}(vzv^{-1}z^{-1}) = ^{\gamma}{v}^{\gamma}{z}(^{\gamma}{v})^{-1}(^{\gamma}{z})^{-1} = (^{\gamma}x\ast ^{\gamma}y)$. Therefore,

$diag(^{\gamma}x\otimes I';(^{\gamma}x)^{-1}\otimes I';I\otimes I')\ast diag(I\otimes ^{\gamma}y,I\otimes I';I\otimes (^{\gamma}y)^{-1}) =  ^{\gamma}(diag(x\otimes I';x^{-1}\otimes I';I\otimes I'))\ast ^{\gamma}(diag(I\otimes y,I\otimes I';I\otimes y^{-1})) = ^{\gamma}(diag(x\otimes I';x^{-1}\otimes I';I\otimes I')\ast diag(I\otimes y,I\otimes I';I\otimes y^{-1}))$ showing that the homomorphism $K_{1}(A)\otimes K_{1}(A)\rightarrow K_{2}(A)$ is a $\Gamma$-homomorphism. This implies a homomorphism $(K_{1}(A)\otimes K_{1}(A))_{\Gamma}\rightarrow (K_{2}(A))_{\Gamma}$. Taking into account the isomorphism $K^{\Gamma}_{1}(A)\otimes K^{\Gamma}_{1}(A) \cong (K_{1}(A)\otimes K_{1}(A))_{\Gamma}$ and the canonical surjection $(K_{2}(A))_{\Gamma}\rightarrow K^{\Gamma}_{2}(A)$, we obtain the required homomorphism $K^{\Gamma}_{1}(A) \otimes K^{\Gamma}_{1}(A)\rightarrow  K^{\Gamma}_{2}(A)$.

The symbol $[x,y]$ is closely related to the Milnor's algebraic K-theory which we will define now.

Let A be a ring with unit. The n-th Milnor algebraic K-group $K^{M}_{n}(A)$ is defined as
$$
K^{M}_{n}(A) = (A^{\star})^{\otimes n}  / \{a_{1}\otimes a_{2} ... \otimes a_{n}: a_{i} + a_{i+1} = 1\},
$$
where $a_{i}\in A^{\star}$, $n\geq 2$ and $\{a_{1}\otimes a_{2} ... \otimes a_{n}\}$ is the Abelian subgroup generated by the elements $a_{1}\otimes a_{2} ... \otimes a_{n}$ satisfying the relation $a_{i} + a_{i+1} = 1$ called the Steinberg relation [14].

If a group $\Gamma$ is acting on the ring A, it induces a natural action on $K^{M}_{n}(A)$ given by its action on $(A^{\star})^{\otimes n}$ componentwise.  Therefore, if A is a $\Gamma$-ring,
 the nth Milnor's $\Gamma$-algebraic K-functor $K^{M,\Gamma}_{n}(A)$ is defined as
$$
((A^{\star})^{\otimes n} / \{a_{1}\otimes a_{2} ... \otimes a_{n}: a_{i} + a_{i+1} = 1\})_{\Gamma},
$$
where $a_{i}\in A^{\star}$ and $n\geq 2$.

Matsumoto proved that the homomorphism $A^{\star}\times A^{\star} / a\otimes (1-a) \rightarrow K_{2}(A)$, $a\neq 0,1$, induced by the symbol $[u,v ]$, is an isomorphism, if A is a field and as we have shown, it is a $\Gamma$-map.
 As a consequence, we obtain
 \begin{thm}
Matsumoto's theorem for $\Gamma$-fields.

 If F is a $\Gamma$-field, one has the isomorphism

$K^{M,\Gamma}_{2}(F) \cong (K_{2}(F))_{\Gamma}$ and the exact sequence

$0\rightarrow (K_{2}(F)\cap \Gamma St(F)) / \Gamma K_{2}(F)\rightarrow K^{M,\Gamma}_{2}(F)\rightarrow K^{\Gamma}_{2}(F)\rightarrow 0$.

\end{thm}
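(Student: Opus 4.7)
The plan is to reduce the statement to the classical Matsumoto isomorphism $\mathrm{Sym}(F)\cong K_2(F)$ and then reuse Theorem 5.5, which was already proved by applying Proposition 5.1 to the canonical surjection $\phi:St(F)\to E(F)$.

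First I would establish the isomorphism $K_2^{M,\Gamma}(F)\cong (K_2(F))_\Gamma$. By the classical Matsumoto theorem, for a field $F$ the map sending $a\otimes b$ to the symbol $[a,b]$ descends to an isomorphism
$$
K_2^M(F)=(F^{*\otimes 2})/\langle a\otimes(1-a)\rangle \xrightarrow{\ \cong\ } K_2(F).
$$
As verified in the paragraph immediately preceding the theorem, the symbol map is a $\Gamma$-map whenever $F$ is a $\Gamma$-ring, so this isomorphism is compatible with the $\Gamma$-action on both sides. Applying the functor $(-)_\Gamma$ (which is well-defined on $\Gamma$-groups and preserves isomorphisms) yields the desired isomorphism $K_2^{M,\Gamma}(F)\cong (K_2(F))_\Gamma$.

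Next I would obtain the exact sequence by invoking Theorem 5.5, whose conclusion is the exact sequence
$$
0\to \mathrm{Ker}\,\Gamma(\phi)/\Gamma\,\mathrm{Ker}(\phi)\to \mathrm{Sym}^\Gamma(F)\to K_2^\Gamma(F)\to 0,
$$
where $\phi:St(F)\to E(F)$ is the canonical surjection. Since $\mathrm{Ker}(\phi)=K_2(F)$ and $\Gamma(\phi)$ is the restriction of $\phi$ to $\Gamma\,St(F)$, one has $\mathrm{Ker}\,\Gamma(\phi)=K_2(F)\cap \Gamma\,St(F)$, so the left-hand term rewrites as $(K_2(F)\cap \Gamma\,St(F))/\Gamma\,K_2(F)$. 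Moreover, the proof of Theorem 5.5 identifies $\mathrm{Sym}^\Gamma(F)$ with $(K_2(F))_\Gamma$ via the $\Gamma$-equivariant Matsumoto isomorphism. Composing with the isomorphism from the previous paragraph replaces the middle term by $K_2^{M,\Gamma}(F)$, yielding precisely
$$
0\to (K_2(F)\cap \Gamma\,St(F))/\Gamma\,K_2(F)\to K_2^{M,\Gamma}(F)\to K_2^\Gamma(F)\to 0.
$$

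The only genuine point to verify carefully is that Matsumoto's classical isomorphism really is $\Gamma$-equivariant; this is the step where one must check, on the level of generators, that both the Steinberg relations defining $K_2(F)$ and the Steinberg-type relation $a\otimes(1-a)=0$ are preserved by the action $^\gamma a\mapsto {}^\gamma a$ componentwise, so that the induced map factors $\Gamma$-equivariantly. Since the $\Gamma$-action is defined on generators by the same formula on both sides, this is routine and was already observed in the discussion around Theorem 5.5. Everything else is a direct application of Proposition 5.1 and the definitions in Section 5.
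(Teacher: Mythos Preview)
Your proposal is correct and follows essentially the same approach as the paper: you use the $\Gamma$-equivariance of the classical Matsumoto isomorphism (verified in the paragraph preceding the theorem via the symbol map) to get $K_2^{M,\Gamma}(F)\cong (K_2(F))_\Gamma$, and then you invoke the earlier $\Gamma$-Matsumoto theorem together with the identification $\mathrm{Ker}\,\Gamma(\phi)=K_2(F)\cap \Gamma St(F)$ to obtain the exact sequence. The only slip is bibliographic: the result you cite as ``Theorem 5.5'' is in fact Theorem 5.7 in the paper's numbering (Proposition 5.5 is the computation of $K_1^\Gamma$ for commutative rings).
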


It is well-known (14[,18]) that the Milnor algebraic K-theory is closely related to the Witt ring of quadratic forms and to
the Bloch higher Chow groups. Now we will attempt to show these relations for the case of $\Gamma$-fields by considering the Milnor's conjectures.

Let $F$ be a field of characteristic different from 2. The Witt ring $W(F)$ is consisting of equivalence classes of non-degenerate quadratic forms. It has a filtration
$$
I(F)\supset I^{2}(F)\supset ... \supset I^{n}(F)\supset ...
$$
given by the powers $I^{n}(F)$ of the fundamental ideal $I(F)$ of even-dimensional quadratic forms. The quotients $I^{n}(F) / I^{n+1}(F)$ play an important role in the study of the Witt ring $W(F)$.

Milnor conjectures the isomorphism $\kappa^{M}_{n}(F)\cong I^{n} / I^{n+1}$, where $\kappa^{M}_{n}(F)$ is  the n-th Milnor K-group modulo 2 of the field $F$, proved in [18].

For the Milnor conjecture about Chow groups, first we will give the definition of the Chow group $CH^{k}(F,n)$ of the field $F$.

Let $V$ be a vector space over the field $F$. The projective space $P(V)$ is the set of equivalence classes of $V\backslash 0$ under the relation $\sim$ defined by $x\sim y$ if there is non zero element $\lambda\in F$ such that $x=\lambda y$. If $V=F^{n+1}$, the relevant projective space is denoted $P^{n}_{F}$,
$$
  P^{n}_{F}= \{a_{1}, ... ,a_{n+1}\}/\sim, a_{i}\in F,
  $$
not all $a_{i}=0$. and $(a_{1}, ... , a_{n+1})\sim (a'_{1}, ... , a'_{n+1})$ if there is a non zero element $\lambda\in F$ such that $a'_{i}=\lambda a_{i}, i=1,...,n+1.$

Let $Z_{k}(F,n)$ be the free abelian group on the set of $k$- dimensional subvarieties of $P^{n}_{F}$ called the group of $k$- dimensional algebraic cycles on $P^{n}_{F}$. Denote $B_{k}(F,n)$ the subgroup of $k$-cycles rationally equivalent to zero. Then the Chow group $CH^{k}(F,n)$ is defined as $Z_{k}(F,n)/B_{k}(F,n)$. It is clear the Chow group $CH^{k}(F,n)$ is trivial for $k\succ n$.

Milnor conjectures the isomorphism $K^{M}_{n}(F)\cong CH^{n}(F,n)$, proved in [15,19,20].

Let $F$ be a $\Gamma$-field. Denote $\kappa^{M,\Gamma}_{n}(F)$ the n-th Milnor K-group modulo 2 and $CH^{n,\Gamma}(F,n)$ the n-th Chow group of the $\Gamma$-field $F$ which will be defined a bit later.

The Milnor conjectures for the $\Gamma$-field $F$ take the form:

\begin{thm}
If $F$ is a $\Gamma$-field, there are isomorphisms

1. $\kappa^{M,\Gamma}_{n}(F)\cong (I^{n} / I^{n+1})_{\Gamma},$

2.  $K^{M,\Gamma}_{n}(F)\cong CH^{n,\Gamma}(F,n).$
\end{thm}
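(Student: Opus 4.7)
The plan is to follow the same pattern that the author has used throughout the paper (for instance in the proof of the $\Gamma$-equivariant Matsumoto theorem): start from the classical Milnor isomorphisms, verify that each of them is $\Gamma$-equivariant, and then apply the coinvariant functor $(-)_{\Gamma}$ to both sides. Thus the overall structure will be to produce, for a $\Gamma$-field $F$, natural $\Gamma$-actions on the right-hand sides which are compatible with the already-given action on the Milnor K-groups, and then invoke naturality of the classical constructions to see that the comparison maps are $\Gamma$-morphisms.

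For part 1, the action of $\Gamma$ on $F$ induces an action on the Witt ring $W(F)$ by the rule $^{\gamma}\langle a_{1},\dots,a_{n}\rangle = \langle {}^{\gamma}a_{1},\dots,{}^{\gamma}a_{n}\rangle$ on diagonal quadratic forms. This action preserves the fundamental ideal $I(F)$ and therefore each power $I^{n}(F)$, so the successive quotients $I^{n}(F)/I^{n+1}(F)$ inherit a canonical $\Gamma$-module structure. The Voevodsky isomorphism $\kappa^{M}_{n}(F)\cong I^{n}(F)/I^{n+1}(F)$ is defined on generators by sending the class of a Milnor symbol $\{a_{1},\dots,a_{n}\}$ mod $2$ to the class of the Pfister form $\langle\!\langle a_{1},\dots,a_{n}\rangle\!\rangle$, and both sides are natural in the field $F$; in particular the isomorphism commutes with any field automorphism, hence with every element of $\Gamma$. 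Applying $(-)_{\Gamma}$ to a $\Gamma$-equivariant isomorphism gives the isomorphism of coinvariants $\kappa^{M,\Gamma}_{n}(F)\cong (I^{n}/I^{n+1})_{\Gamma}$.

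For part 2, I first need to make precise the definition of $CH^{n,\Gamma}(F,n)$. The $\Gamma$-action on $F$ acts on $P^{n}_{F}$ coordinatewise, and takes $k$-dimensional subvarieties to $k$-dimensional subvarieties; it therefore induces a $\Gamma$-action on the free abelian group $Z_{k}(F,n)$ which preserves the subgroup $B_{k}(F,n)$ of cycles rationally equivalent to zero (since rational equivalence is itself defined intrinsically from the field). Hence $CH^{k}(F,n)$ is naturally a $\Gamma$-module, and one defines $CH^{n,\Gamma}(F,n)\defeq (CH^{n}(F,n))_{\Gamma}$. The Nesterenko--Suslin/Totaro isomorphism $K^{M}_{n}(F)\cong CH^{n}(F,n)$ is again natural in $F$, so is $\Gamma$-equivariant; passing to coinvariants yields $K^{M,\Gamma}_{n}(F)\cong CH^{n,\Gamma}(F,n)$.

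The main obstacle is the $\Gamma$-equivariance check for the two classical comparison maps, and it is genuinely the only non-formal point: once equivariance is in hand the passage to coinvariants is automatic. For the Voevodsky map this is essentially immediate from the explicit formula on Pfister symbols, since both Milnor symbols and Pfister forms transform in the same way under a field automorphism. For the cycle-class map into $CH^{n}(F,n)$ one has to verify that the symbolic representatives used by Nesterenko--Suslin (or Totaro) are chosen in a way independent of coordinates on $F$, which again reduces to naturality of the construction under arbitrary field automorphisms. Once these verifications are made, the two displayed isomorphisms follow from the exactness of $(-)_{\Gamma}$ on isomorphisms together with the definitions of $\kappa^{M,\Gamma}_{n}(F)$, $K^{M,\Gamma}_{n}(F)$, and $CH^{n,\Gamma}(F,n)$.
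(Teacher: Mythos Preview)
Your proposal is correct and follows essentially the same approach as the paper: define the $\Gamma$-actions on $I^{n}/I^{n+1}$ via Pfister forms and on $CH^{n}(F,n)$ via the coordinatewise action on $P^{n}_{F}$, verify that the classical Milnor comparison maps are $\Gamma$-equivariant (the paper checks this explicitly on generators and on the product map of Chow groups, where you invoke naturality under field automorphisms), and then pass to coinvariants. The only cosmetic difference is that the paper spells out the map $\beta^{M}_{n}$ as induced from $\beta^{M}_{1}:F^{\ast}\to CH^{1}(F,1)$ together with the intersection product, and checks $\Gamma$-equivariance of each piece separately, whereas you bundle this into a single naturality appeal.
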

$Proof.$ For the first isomorphism  we have to define the group $\kappa^{M,\Gamma}_{n}(F)$ and the action of $\Gamma$ on $I^{n} / I^{n+1}.$ The action of $\Gamma$ on $K^{M}_{n}(F)$ induces its action on $\kappa^{M}_{n}(F) = K^{M}_{n}(F) / 2K^{M}_{n}(F)$, and define $\kappa^{M,\Gamma}_{n}(F)$ as equal to $(\kappa^{M}_{n}(F))_{\Gamma}.$

The ideal $I^{n}(F)$ is additively generated by the $n$-fold Pfister forms $<< a_{1}, ... ,a_{n} >> = \bigotimes^{n}_{i=1} <1, -a_{i}>$ which is a quadratic form of dimension $2^{n}$, $a_{i}\in F^{\star}$. For instance, the 1-fold and 2-fold Pfister forms are respectively  $<<a>>\cong <1, -a> = x_{1}^{2}-ax_{2}^{2}$ and $<<a,b>> \cong <1,-a,-b-,ab> = x_{1}^{2}-ax_{2}^{2}-bx_{3}^{2}+abx_{4}^{2}$.

If $F$ is a $\Gamma$-ring, the action of the group $\Gamma$ on the field $F$ induces a natural action of $\Gamma$ on the generators of the power $I^{n}(F)$ given by $^{\gamma}(<< a_{1}, ... ,a_{n} >>) = << ^{\gamma}a_{1}, ... ,^{\gamma}a_{n} >>$, $\gamma \in \Gamma$. The above given filtration of the powers of the fundamental ideal $I(F)$ becomes a filtration of $\Gamma$-groups. It is easily checked that this action is compatible with the equivalence of quadratic forms.

For $\Gamma$-fields the quotient corresponding to the quotient $I^{n} / I^{n+1}$ would be its equivariant form $(I^{n} / I^{n+1})_{\Gamma}$ which is equal to $I^{n} / I^{n+1}\cdot \Gamma I^{n}$.

There is a homomorphism $\alpha^{M}_{n}$ from $\kappa^{M}_{n}(F)$ to the quotient $I^{n} / I^{n+1}$ defined by Milnor and given by $(a_{1}, ... ,a_{n})\rightarrow << a_{1}, ... ,a_{n} >>$.

 We need to show that the homomorphism $\alpha^{M}_{n}$ is a $\Gamma$-map. In effect, it suffices to show this fact on the set of generators $\{(a_{1}, ... ,a_{n}), a_{i}\in F^{\star}\}$. One has $^{\gamma}(a_{1}, ... ,a_{n}) = (^{\gamma}a_{1}, ... ,^{\gamma}a_{n}) = << ^{\gamma}a_{1}, ... ,^{\gamma}a_{n} >> = ^{\gamma}(<< a_{1}, ... ,a_{n} >>)$. This implies the isomorphism

$$
(K^{M}_{n}(F) / 2K^{M}_{n}(F))_{\Gamma}\cong (I^{n} / I^{n+1})_{\Gamma}
$$
and the proof of the first Milnor conjecture.

Regarding the second Milnor conjecture, we will define the action of the group $\Gamma$ on the Chow group $CH^{k}(F,n)$ induced by its action on the field $F$. It is clear that it induces an action on the projective space $P^{n}_{F}$ defined by $^{\gamma}(a_{1}, ... ,a_{n}) = (^{\gamma}a_{1}, ... ,^{\gamma}a_{n})$. This action induces an action on the set of $k$-dimensional subvarieties and therefore, on the groups $Z_{k}(F,n)$ and $B_{k}(F,n)$. Finally, the action of $\Gamma$ on the Chow group $CH^{k}(F,n)$ is induced by its action on $Z_{k}(F,n)$. The $\Gamma$-Chow group $CH^{k,\Gamma}(F,n)$ is defined as equal to $(CH^{k}(F,n))_{\Gamma}$.

There is a well-known homomorphism $\beta^{M}_{n}:K^{M}_{n}(F)\rightarrow CH^{n}(F,n)$ and Milnor raised the homomorphism $\beta^{M}_{n}$ is an isomorphism [14]. For its definition we need the product map of Chow groups. The product map $CH^{p}(F,q)\times CH^{r}(F,s)\rightarrow CH^{p+r}(F,q+s)$ is given by intersecting algebraic cycles, namely the product $|y| |z|$ in $CH^{p+r}(F,q+s)$ is the sum of the varieties of the intersection $Y\bigcap Z$ which all have codimension $q+s$, where $Y$ and $Z$ are subvarieties of $(F,q)$ and $(F,s)$ respectively. It is easily checked that the product map is a $\Gamma$-map with componentwise action of $\Gamma$ on the product.

The homomorphism $\beta^{M}_{n}$ is induced by the homomorphism $\beta^{M}_{1}:F^{\ast}\rightarrow CH^{1}(F,1)$ of abelian groups, sending the unit $e$ of $F^{\ast}$ to $0\in CH^{1}(F,1)$, and the element $a, a\neq e,$ to the class of the element $(a,1)$ of the projective space $P^{1}_{F}$. Since $\beta^{M}_{1}$ and the product map are $\Gamma$-maps, it follows that the induced homomorphism $\beta^{M}_{n}$ is a $\Gamma$-homomorphism. This implies the isomorphism
$$
(K^{M}_{n}(F))_{\Gamma} \cong (CH^{n}(F,n))_{\Gamma}
$$
and completes the proof of Milnor's conjectures for $\Gamma$-fields.

\begin{cor}
For $\Gamma$-fields the relationship between Chow groups and Milnor K-functor $K_{2}$ takes the following form:
$$
0\rightarrow (K_{2}(F)\cap \Gamma St(F)) / \Gamma K_{2}(F)\rightarrow CH^{n,\Gamma}(F,n)\rightarrow K^{\Gamma}_{2}(F)\rightarrow 0.
$$
\end{cor}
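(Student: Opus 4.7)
The plan is to combine two results already established earlier in this section: the $\Gamma$-equivariant Matsumoto theorem for fields and the second part of the $\Gamma$-Milnor conjecture. The Matsumoto theorem for $\Gamma$-fields provides the exact sequence
$$
0\rightarrow (K_{2}(F)\cap \Gamma St(F)) / \Gamma K_{2}(F)\rightarrow K^{M,\Gamma}_{2}(F)\rightarrow K^{\Gamma}_{2}(F)\rightarrow 0,
$$
whose middle term is Milnor's $\Gamma$-equivariant $K_2$, and the second Milnor conjecture (just proved) supplies a natural isomorphism $K^{M,\Gamma}_{n}(F)\cong CH^{n,\Gamma}(F,n)$. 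The strategy is simply to splice this isomorphism into the middle slot of the Matsumoto sequence, reading the Chow index in the corollary at the $K_{2}$-level.

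First, I would recall that the identification with the Chow group is obtained from $\beta^{M}_{n}\colon K^{M}_{n}(F)\to CH^{n}(F,n)$, which was shown to be a $\Gamma$-homomorphism; passing to $\Gamma$-coinvariants it becomes the isomorphism $K^{M,\Gamma}_{n}(F)\cong (CH^{n}(F,n))_{\Gamma}=CH^{n,\Gamma}(F,n)$. Since this isomorphism is canonical and functorial, the maps entering and leaving the middle term of the Matsumoto sequence transport verbatim through $\beta^{M}_{n}$.

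Second, I would substitute $CH^{n,\Gamma}(F,n)$ for $K^{M,\Gamma}_{2}(F)$ in the Matsumoto exact sequence, pre- and post-composing the relevant arrows with $\beta^{M}_{n}$ and its inverse. Replacing a term in an exact sequence by an isomorphic one through a chosen isomorphism preserves exactness, so the resulting sequence
$$
0\rightarrow (K_{2}(F)\cap \Gamma St(F)) / \Gamma K_{2}(F)\rightarrow CH^{n,\Gamma}(F,n)\rightarrow K^{\Gamma}_{2}(F)\rightarrow 0
$$
is exact and is exactly the statement of the corollary.

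The only genuinely delicate point, and the reason the corollary is not a completely automatic restatement, is the compatibility of the Chow-group identification with the surjection onto $K^{\Gamma}_{2}(F)$: one must check that the composite $CH^{n,\Gamma}(F,n)\to K^{\Gamma}_{2}(F)$ agrees with the map obtained by applying $(\beta^{M}_{n})^{-1}$ followed by the Matsumoto projection. Since both constructions are induced by the symbol map on $F^{\ast}\otimes F^{\ast}$, which is the common origin of Matsumoto's isomorphism and of $\beta^{M}_{2}$, this compatibility is automatic once the definitions are traced through, and no further calculation is required.
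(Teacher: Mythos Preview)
Your proposal is correct and follows exactly the paper's own argument: the paper simply says the corollary ``follows immediately from Theorems 5.18 and 5.19,'' i.e., from the $\Gamma$-Matsumoto exact sequence and the isomorphism $K^{M,\Gamma}_{n}(F)\cong CH^{n,\Gamma}(F,n)$, which is precisely the splice you perform. Your added remark on the compatibility of the symbol maps is a reasonable elaboration, though the paper does not spell it out.
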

Follows immediately from Theorems 5.18 and 5.19.
\;

\section{Acknowledgment}
I would like to thank the referee for carefully reading the manuscript
and for giving such constructive remarks which substantially helped
improving the quality of the presentation of the paper.
\;



\begin{bibdiv}
\begin{biblist}

\bib{Ad}{article}{
author={Adamson I.T},
title={Cohomology theory for non-normal subgroups and non-normal fields},
journal={Glasgow Math. Ass.},
volume={1954(2)},
date={1954},
pages={66-76},
}	

\bib{ad}{book}{
author={Bass H.},
title={Algebraic K-theory},
Publisher={W.A.Benjamin, Inc.},
date={1968},
}

\bib{GrMePa}{article}{
author={Grant M.},
author={Meir E.},
author={Patchkoria I.},
title={Equivariant dimensions of groups with operators},
journal= {Groups, Geometry, and Dynamics},
volume={16}
pages={1049-1075}
date={2022},

}

\bib{GrMePa}{article}{
author={Grant M.},
author={Li K.},
author={Meir E.},
author={Patchkoria I.},
title={Comparison of equivariant cohomological dimensions},
journal= {arXiv},
volume={2302.08574v1}
pages={1049-1075}
date={2023},

}

\bib{Gu}{article}{
author={Guin D.},
title={Cohomologie et homologie non abéliennes des groupes},
journal={J. Pure Appl. Algebra},
volume={50},
date={1088},
pages={109-137},
}

\bib{Ho}{article}{
author={Hochschild G.},
title={Relative homological algebra},
journal={Trans. Amer. Math. Soc.},
volume={82},
date={1956},
pages={246-269},
}

\bib{In}{book}{
author={Inassaridze H.},
title={Non-Abelian Homological Algebra and its Applications},
Publisher={Kluwer Academic, Dordrecht},
date={1997},

}

\bib{In}{article}{
author={Inassaridze H.},
title={Equivariant homology and cohomology of groups},
journal={Topology and its Applications },
volume={153},
date={2005},
pages={66-89},
}

\bib{InKh}{article}{
author={Inassaridze H.},
author={Khmaladze E.}
title={Hopf formulas for equivariant integral homology of groups},
journal={Proc. Amer. Math. Soc.},
volume={138(9)},
date={2010},
pages={3037-3046},
}

\bib{In}{article}{
author={Inassaridze H.},
title={(Co)homology of $\Gamma$-groups and $\Gamma$-homological algebra},
journal={European Journal of Mathematics},
volume={8(Suppl 2)},
date={2022},
pages={5720-5763},
}

\bib{Lo}{book}{
author={Loday J-L.},
title={Cyclic Homology},
Publisher={Springer, Grundlehren der mathematischen Wissenschaften 301},
date={1998},

}

\bib{Ma}{article}{
author={Matsumoto H.},
title={Sur les sous groupes arithmétiques des groupes semi-simples déployés},
journal={Ann. Sci. Norm. Sup.},
volume={2(4)},
date={1969},
pages={1-62},
}

\bib{Mi}{book}{
author={Milnor J.},
title={Introduction to Algebraic K-theory},
Publisher={Annals of Mathematic Studies 72, Princeton University Press, NJ},
date={1971},

}

\bib{Mi}{article}{
author={Milnor J.},
title={Algebraic K-theory and Quadratic Forms},
Publisher={},
volume={9}
date={1970},
pages={318-344},
}

\bib{OrViVo}{article}{
author={Orlov D.},
author={Vishik A.},
author={Voevodsky D.},
title= {An exact sequence for $K^{M}_{\ast} / 2$ with applications to quadratic forms},
Publisher={Annals of Mathematics},
volume={165},
date={2007},
pages={1-13},

}

\bib{Qu}{article}{
author={Quillen D.},
title={Higher algebraic K-theory},
journal={Lecture Notes in Math. Springer},
volume={341},
date={1973},
pages={77-139},

}

\bib{Sw}{book}{
author={Swan R.G.},
title={Algebraic K-theory},
Publisher={Lecture Notes in Mathematics, 76},
date={1968},

}

\bib{To}{article}{
author={Totaro B.},
title={Milnor K-Theory is the Simplest Part of Algebraic K-Theory},
journal={K-Theory},
volume={6}
date={1992},
pages={177-189},
}

\bib{Vo}{article}{
author={Voevodsky V.},
title={Motivic cohomology with Z/2-coefficients},
journal={Publ. Math. Inst. Hautes Etudes Sci.},
volume={98}
date={2003},
pages={59-104},
}

\bib{Vo}{article}{
author={Voevodsky V.},
title={Reduced power operations in motivic cohomology},
journal={Publ. Math. Inst. Hautes Etudes Sci.},
volume={98}
date={2003},
pages={1-57},
}

\end{biblist}
\end{bibdiv}
\end{document}